\title{ACC for log canonical thresholds for complex analytic spaces}
\author{Osamu Fujino}
\dedicatory{Dedicated to Professor Vyacheslav V.~Shokurov on the 
occasion of his seventieth birthday}
\date{2022/8/25, version 0.01}
\subjclass[2010]{Primary 14E30; Secondary 32S05}
\keywords{log canonical thresholds, ascending chain condition, 
log canonical singularities, minimal model program, complex analytic 
singularities}
\address{Department of 
Mathematics, Graduate School of Science, 
Kyoto University, Kyoto 606-8502, Japan}
\email{fujino@math.kyoto-u.ac.jp}
\DeclareMathOperator{\lct}{lct}
\DeclareMathOperator{\Supp}{Supp}
\DeclareMathOperator{\LCT}{LCT}
\newtheorem{thm}{Theorem}[section]
\newtheorem{lem}[thm]{Lemma}
\theoremstyle{definition}
\newtheorem{defn}[thm]{Definition}
\newtheorem{rem}[thm]{Remark}
\newtheorem{ex}[thm]{Example}
\newtheorem*{ack}{Acknowledgments}  
\begin{document}

\maketitle 

\begin{abstract} 
We show that log canonical thresholds for complex 
analytic spaces satisfy the ACC.  
\end{abstract}

\section{Introduction}

As usual, {\em{ACC}} stands for the {\em{ascending chain 
condition}} and {\em{DCC}} stands for the {\em{descending 
chain condition}}. 
In \cite{hmx}, the ACC for log canonical thresholds, 
which was conjectured by Shokurov, was completely 
settled for algebraic varieties. 
We note that Shokurov raised many 
conjectures that assert the ascending or 
descending chain condition for various 
naturally defined invariants coming from algebraic geometry (see, for 
example, \cite{shokurov-problem}, \cite{shokurov1}, 
\cite[Chapter 18]{kollar-et}, \cite[Section 8]{kollar1}, 
and so on). 
In this paper, we generalize it for complex analytic spaces. 

Let us start with the definition of {\em{log canonical thresholds}} 
for complex analytic spaces. Note that 
$X$ is a normal complex analytic space in Definition \ref{def1.1}. 
For various aspects of log canonical 
thresholds, we strongly recommend the 
reader to see \cite[Sections 8, 9, and 10]{kollar1}. 

\begin{defn}[Log canonical thresholds for 
complex analytic spaces]\label{def1.1}
Let $(X, \Delta)$ be a log canonical pair and 
let $M$ be an effective $\mathbb R$-Cartier $\mathbb R$-divisor 
on $X$. 
Let $c$ be a nonnegative real number such that 
$(X, \Delta+cM)$ is log canonical 
and that there exists a non-kawamata 
log terminal center of $(X, \Delta+cM)$ which is contained 
in $\Supp M$. 
Then $c$ is called the {\em{log canonical threshold}} of 
$M$ with respect to $(X, \Delta)$ and is 
usually denoted by $\lct (X, \Delta; M)$. 
When $M=0$, we put $\lct(X, \Delta; M)=+\infty$. 
\end{defn}

The following definition and example show 
the reason why we adopt the above definition 
of log canonical thresholds 
for complex analytic spaces, which looks slightly different from 
the usual definition of log canonical thresholds for 
algebraic varieties. 

\begin{defn}\label{def1.2} 
Let $X$ be a normal complex variety. A {\em{prime divisor}} 
on $X$ is an irreducible and reduced closed subvariety 
of codimension one. 
An {\em{$\mathbb R$-divisor}} $D$ on $X$ 
is a {\em{locally finite}} formal sum 
\begin{equation*}
D=\sum _i a_i D_i, 
\end{equation*} 
where $D_i$ is a prime divisor on $X$ with 
$a_i\in 
\mathbb R$ for every $i$ and 
$D_i\ne D_j$ for $i\ne j$. 
When $a_i\in \mathbb Q$ holds for every $i$, 
$D$ is called a {\em{$\mathbb Q$-divisor}} 
on $X$. 

Let $D$ be an $\mathbb R$-divisor 
on a normal complex variety $X$ and let 
$x$ be a point of $X$. 
If $D$ is written as a finite $\mathbb R$-linear 
(resp.~$\mathbb Q$-linear) combination 
of Cartier divisors on some open neighborhood of $x$, 
then $D$ is said to be {\em{$\mathbb R$-Cartier 
at $x$}} (resp.~{\em{$\mathbb Q$-Cartier at $x$}}). 
If $D$ is $\mathbb R$-Cartier 
(resp.~$\mathbb Q$-Cartier) at $x$ for every $x\in X$, 
then $D$ is said to be {\em{$\mathbb R$-Cartier}} 
(resp.~{\em{$\mathbb Q$-Cartier}}). 
\end{defn}

\begin{ex}\label{ex1.3} 
We consider $X=\mathbb C$. 
Let $\{P_n\}_{n\in \mathbb Z_{>0}}$ be a 
set of mutually distinct discrete points of $X$. 
We put $M=\sum _{n\in \mathbb Z_{>0}} \frac{n-1}{n}P_n$. 
Then $M$ is a $\mathbb Q$-Cartier $\mathbb Q$-divisor 
on $X$. 
In this case, $(X, M)$ is log canonical and 
$(X, tM)$ is not log canonical for every positive 
real number $t$ with $t>1$. 
However, there are no non-kawamata log terminal centers 
of $(X, M)$, that is, $(X, M)$ is kawamata log terminal. 
\end{ex}

We note an obvious remark. 

\begin{rem}\label{rem1.4}(1) 
If $(X, \Delta)$ and $M$ are both algebraic in 
Definition \ref{def1.1}, 
then it is easy to see that 
the following equality 
\begin{equation*}
\lct(X, \Delta; M)=\sup \{t\in \mathbb R\, |\, 
{\text{$(X, \Delta+tM)$ is log canonical}}\}
\end{equation*} 
holds. 

(2) In Definition \ref{def1.1}, let $U$ be a relatively 
compact open subset of $X$. 
Then we can check that 
\begin{equation*}
\lct(U, \Delta|_U; M|_U)=\sup \{t\in \mathbb R\, |\, 
{\text{$(U, \Delta|_U+tM|_U)$ is log canonical}}\} 
\end{equation*} 
holds by using the resolution of singularities. 
\end{rem}

By Remark \ref{rem1.4} (1), $\lct(X, \Delta; M)$ coincides with 
the usual one when $(X, \Delta)$ and $M$ are all algebraic. 

\begin{defn}\label{def1.5}
Let $\mathfrak{T}^{an}=\mathfrak{T}_n^{an}(I)$ denote 
the set of log canonical pairs $(X, \Delta)$, where 
$X$ is a normal complex variety of dimension $n$ and the coefficients 
of $\Delta$ belong to a set $I\subset [0, 1]$. 
We put 
\begin{equation*}
\LCT^{an}_n(I, J)=\{ \lct(X, \Delta; M) \,|\, 
(X, \Delta)\in \mathfrak{T}^{an}_n(I)\}, 
\end{equation*}
where the coefficients of $M$ belong to a subset $J$ of the positive real 
numbers. 
\end{defn}

The main result of this short paper is the ACC for log canonical 
thresholds for complex analytic spaces, which is a generalization 
of \cite[Theorem 1.1]{hmx}. 

\begin{thm}[ACC for the log canonical threshold for complex 
analytic spaces]\label{thm-main}
We fix a positive integer $n$, $I\subset [0, 1]$, and a subset 
$J$ of the positive real numbers. 
If $I$ and $J$ satisfy the DCC, then $\LCT^{an}_n(I, J)$ 
satisfies the ACC. 
\end{thm}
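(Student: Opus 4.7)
The strategy is to reduce Theorem \ref{thm-main} to the algebraic ACC of \cite[Theorem~1.1]{hmx} by a localization-and-algebraization argument, exploiting the fact that a log canonical threshold is an invariant of the formal germ at a suitably chosen point. I would argue by contradiction: suppose $\LCT^{an}_n(I, J)$ contains a strictly increasing sequence $c_i = \lct(X_i, \Delta_i; M_i)$. By Definition \ref{def1.1}, for each $i$ there is a non-kawamata log terminal center $Z_i$ of $(X_i, \Delta_i + c_i M_i)$ with $Z_i \subset \Supp M_i$; pick a point $x_i \in Z_i$ and a relatively compact open neighborhood $U_i \ni x_i$. By Remark \ref{rem1.4}(2), on $U_i$ the threshold is given by the usual supremum formula, and since $x_i$ lies in the non-klt center $Z_i \cap U_i \subset \Supp M_i|_{U_i}$ witnessing failure of klt, this supremum equals $c_i$. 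After shrinking, the global analytic lct on $U_i$ coincides with the germ lct at $x_i$, and only finitely many irreducible components of $\Delta_i$ and $M_i$ meet $U_i$.

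Next, I would algebraize the germ $(X_i, \Delta_i + M_i)$ at $x_i$. Whether a valuation centered over $x_i$ has log discrepancy $\leq 0$ with respect to a given pair depends only on the formal completion $\widehat{\mathcal O}_{X_i, x_i}$ together with the formal equations defining the finitely many components of $\Delta_i$ and $M_i$ through $x_i$; this is because a log resolution can be constructed compatibly with formal completion, so all discrepancies appearing in the definition of lct are computable after completion. By Artin's approximation theorem, there exist a normal quasi-projective variety $Y_i$, a closed point $y_i \in Y_i$, an isomorphism $\widehat{\mathcal O}_{Y_i, y_i} \cong \widehat{\mathcal O}_{X_i, x_i}$, and algebraic prime divisors on a Zariski neighborhood of $y_i$ whose formal equations match those of the relevant components of $\Delta_i$ and $M_i$. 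Endowing these with the same coefficients yields algebraic divisors $\Gamma_i$ and $N_i$ with coefficients in $I$ and $J$, respectively, and the formal matching provides a non-klt center of $(Y_i, \Gamma_i + c_i N_i)$ through $y_i$ sitting inside $\Supp N_i$. Shrinking $Y_i$ to a small affine open around $y_i$, one therefore has $\lct(Y_i, \Gamma_i; N_i) = c_i$ in the algebraic sense of Remark \ref{rem1.4}(1), so $c_i \in \LCT_n(I, J)$. Applying \cite[Theorem~1.1]{hmx} contradicts the strict ascent of the $c_i$.

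The main obstacle is the algebraization step. Two points must be handled carefully: first, that Artin approximation can be arranged compatibly with the decomposition of $\Delta_i$ and $M_i$ into prime components on a fixed small neighborhood of $x_i$ (this reduces to approximating finitely many analytic generators of the ideals of the components by algebraic ones); second, that the log canonical threshold really is an invariant of the formal germ together with this decomposition data, which in turn rests on excellence of analytic local rings and the compatibility of log resolutions with completion, ensuring that every valuation relevant to the lct is detected on the formal germ.
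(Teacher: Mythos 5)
Your localization step is sound: choosing $x_i$ on the non-kawamata log terminal center $Z_i\subset \Supp M_i$ and invoking Remark \ref{rem1.4}(2) does give $\lct(U_i,\Delta_i|_{U_i};M_i|_{U_i})=c_i$, since the divisorial valuation computing $Z_i$ has positive multiplicity along $M_i$, so the pair becomes non-log-canonical near $x_i$ for every $c>c_i$. The genuine gap is the algebraization step, and it is not a removable technicality. You need an algebraic germ $(Y_i,y_i)$ with $\widehat{\mathcal O}_{Y_i,y_i}\cong\widehat{\mathcal O}_{X_i,x_i}$ \emph{together with} exactly matching formal equations for the components of $\Delta_i$ and $M_i$ through $x_i$. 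Artin approximation does not deliver this: it produces algebraic data agreeing with the analytic data only modulo $\mathfrak m^N$ for a prescribed $N$. Exact formal matching is Artin's algebraization theorem, which requires the singularity to be isolated; but your point $x_i$ lies on a non-klt center contained in $\Supp M_i$, which is in general positive-dimensional, so the singularities of the pair at $x_i$ are non-isolated and algebraization is unavailable (nor is it known that every normal analytic germ is algebraizable). To make do with finite-order agreement you would need a finite-determinacy statement: that the log canonical threshold, and the existence of a non-klt center inside $\Supp M$, depend only on a sufficiently high jet of the data on the singular germ. This is known for holomorphic functions on smooth ambient space --- it is essentially how \cite{kollar2} identifies $\mathcal H\mathcal T_n$ with its algebraic counterpart, giving Theorem \ref{thm1.9} --- but for normal singular germs with boundary it is an unproved claim of roughly the same depth as the theorem itself, and you do not supply it. The same goes for the assertion that the lct is an invariant of the formal completion: this requires log resolutions and discrepancies to descend through completion, i.e.\ a theory of pairs over excellent complete local rings compatible with both the analytic and the algebraic theories, which you acknowledge as the main obstacle but do not resolve.

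The paper's proof sidesteps all of this by never algebraizing the germ. Using the analytic dlt blow-up of \cite{fujino-analytic} over a Stein compact neighborhood of a general point of the non-klt center $V_i$, adjunction to a $\pi$-exceptional divisor dominating $V_i$, and restriction to a general fiber over $V_i$, it manufactures a \emph{projective} numerically trivial log canonical pair $(S_i,\Theta_i)$ of dimension at most $n-1$ whose boundary carries the coefficient $\frac{m_i-1+f_i+k_ic_i\alpha_i}{m_i}$, and then applies the global ACC for numerically trivial pairs \cite[Theorem 1.5]{hmx} rather than the algebraic lct ACC \cite[Theorem 1.1]{hmx}. The only algebraic input is applied to honestly projective varieties, so no approximation or algebraization of analytic germs is ever required.
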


The main ingredient of the proof of Theorem 
\ref{thm-main} is the ACC for numerically trivial pairs, 
which is nothing but \cite[Theorem 1.5]{hmx} 
(see Theorem \ref{thm1.7}), 
and the minimal model 
program for projective morphisms between complex analytic 
spaces established in \cite{fujino-analytic}. 
Note that one of the motivations of \cite{fujino-analytic} 
is to make the minimal model program applicable to the study 
of germs of complex analytic singularities. 
We also note that a similar result was obtained 
independently by Das, Hacon, and P\u aun 
(see \cite[Theorem 6.4]{dhp}). 

\begin{thm}[{ACC for numerically trivial pairs, 
see \cite[Theorem 1.5]{hmx}}]\label{thm1.7}
Fix a positive integer $n$ and a set $I\subset [0, 1]$, which 
satisfies the DCC. Then there is a finite subset $I_0\subset I$ with 
the following property: 

If $(X, \Delta)$ is an $n$-dimensional projective log 
canonical pair such that $K_X+\Delta$ is 
numerically trivial and that the coefficients of $\Delta$ 
belong to $I$, then the coefficients of $\Delta$ belong to 
$I_0$. 
\end{thm}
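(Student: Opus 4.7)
The plan is to argue by induction on $n$ and by contradiction, assuming both this statement and the ACC for log canonical thresholds hold in all dimensions less than $n$. Suppose for contradiction that there exist $n$-dimensional projective log canonical pairs $(X_i, \Delta_i)$ with $K_{X_i} + \Delta_i \equiv 0$, with coefficients of $\Delta_i$ in the DCC set $I$, but such that no finite subset of $I$ contains the coefficients of every $\Delta_i$. After passing to dlt models (which preserves projectivity, log canonicity, and the numerical triviality of $K + \Delta$), I would label the prime components as $\Delta_i = \sum_j a_{i,j} D_{i,j}$.

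Using the DCC of $I$, I would pass to a subsequence so that for each reorganized index $j$ the sequence $\{a_{i,j}\}_i$ converges from above to a limit $a_j^\infty \in I \cup \{0\}$. Setting $\Delta_i' := \sum_j a_j^\infty D_{i,j} \le \Delta_i$, the pair $(X_i, \Delta_i')$ remains log canonical, and $K_{X_i} + \Delta_i'$ is antieffective in the numerical sense, while the defect $\Delta_i - \Delta_i'$ is effective with coefficients tending to zero. The hypothesis on non-finiteness of the coefficient set ensures that for infinitely many $i$ the defect is nonzero.

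Next I would run a $(K_{X_i}+\Delta_i')$-MMP. Since $K_{X_i} + \Delta_i'$ is not pseudo-effective, this terminates either with a good minimal model on which $K + \Delta_i' \equiv 0$, in which case every component of the defect $\Delta_i - \Delta_i'$ must be contracted and log canonicity of $(X_i, \Delta_i)$ is violated for $i$ large (because adding a small positive multiple of a contracted component creates a non-lc place), yielding a contradiction, or with a Mori fiber space $f_i : Y_i \to Z_i$ on which some component of the defect is horizontal. In the latter case, I would restrict to a general fiber $F_i$ and apply divisorial adjunction or the canonical bundle formula; the resulting lower-dimensional log Calabi--Yau pair has coefficients in a DCC set derived from $I$ (this is where the preservation of DCC under adjunction is essential), and the inductive hypothesis forces these coefficients into a finite subset, contradicting the persistence of the defect on $F_i$.

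The main obstacle is controlling how the MMP interacts with the bookkeeping of coefficients: one must ensure that components with shrinking but positive defect are neither lost nor merged in a way that erases the contradiction, and that after passage to a general fiber of the Mori fiber space, the coefficients produced by adjunction still form a DCC set (and not merely a bounded one) so that the inductive hypothesis applies. This requires combining the existence and termination of MMPs for lc pairs with precise adjunction formulas and careful compatibility arguments between the limit pair $(X_i, \Delta_i')$ and the original pair $(X_i, \Delta_i)$ throughout the program.
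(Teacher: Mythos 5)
The first thing to note is that the paper contains no proof of Theorem \ref{thm1.7} at all: it is quoted verbatim from \cite[Theorem 1.5]{hmx} and used as a black box, the paper's actual contribution being the reduction of the analytic Theorem \ref{thm-main} to it via Lemma \ref{lem2.2}. So your attempt must be measured against the proof in \cite{hmx}, whose broad architecture (intertwined induction with the ACC for log canonical thresholds, dlt modifications, replacing coefficients by their limits, running an MMP, and cutting down the dimension by adjunction) your sketch does echo in outline.

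However, your central mechanism is backwards, and this is a genuine gap rather than a cosmetic one. Since $I$ satisfies the DCC, it contains no infinite strictly decreasing sequence; an infinite set of coefficient values therefore yields strictly \emph{increasing} sequences, so after passing to subsequences the coefficients $a_{i,j}$ approach their limits \emph{from below}, the limits lie in the closure of $I$ (not necessarily in $I\cup\{0\}$), and the limiting boundary satisfies $\Delta_i'\geq \Delta_i$, not $\Delta_i'\leq \Delta_i$. Consequently $K_{X_i}+\Delta_i'$ is numerically equivalent to the \emph{effective} divisor $\Delta_i'-\Delta_i$, hence pseudo-effective --- the opposite of your ``antieffective'' --- so the engine of your argument (a $(K_{X_i}+\Delta_i')$-MMP forced to end in a Mori fiber space by non-pseudo-effectivity) never starts. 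Two further problems compound this. First, raising coefficients can destroy log canonicity, and this is precisely where \cite{hmx} must invoke the ACC for log canonical thresholds \emph{in dimension $n$ itself} (legitimate in their induction because it is deduced from the global statement in dimension $n-1$); your scheme only assumes the lower-dimensional statements and never uses the lct ACC where it is actually needed. Second, in your minimal-model branch, the assertion that adding a small positive multiple of a contracted component ``creates a non-lc place'' is false in general: small perturbations of a log canonical pair that is klt along the relevant locus remain log canonical, so no contradiction arises without a quantitative input. You also lean on existence and termination of MMPs for arbitrary lc pairs, which is stronger than what is known; \cite{hmx} arrange this through $\mathbb Q$-factorial dlt models and MMP with scaling in the specific effective situation their (correctly oriented) perturbation produces.
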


We note that de Fernex, Ein, and Musta\c{t}\u{a} established a striking 
result on Shokurov's ACC conjecture before \cite{hmx}. 
Here we only explain a very special case. 
For the details and some related topics, 
see \cite{dfem}, \cite{kollar2}, \cite{totaro}, and so on. 

\begin{defn}[Log canonical thresholds of 
holomorphic functions]\label{def1.8}
Let $f$ be a holomorphic function in a neighborhood 
of $0\in \mathbb C^n$. 
The {\em{log canonical threshold}} 
of $f$ at $0$ is the number $c=\lct_0(f)$ such that 
\begin{itemize}
\item $|f|^{-s}$ is $L^2$ in a neighborhood 
of $0$ for $s<c$, and 
\item $|f|^{-s}$ is not $L^2$ in a neighborhood 
of $0$ for $s>c$. 
\end{itemize}
Hence, if $f(0)\ne 0$, then $\lct_0(f)=+\infty$. 

We put 
\begin{equation*}
\mathcal H\mathcal T_n:=\{\lct_0(f)\, |\, f\in \mathcal O_{\mathbb C^n, 0}, 
f(0)=0\} \subset \mathbb R. 
\end{equation*}
This means that $\mathcal H\mathcal T_n$ is the set of log canonical 
thresholds of all possible holomorphic functions of $n$ variables 
vanishing at $0\in \mathbb C^n$. 
\end{defn}

Then we have: 

\begin{thm}[{\cite{dfem}}]\label{thm1.9}
$\mathcal H\mathcal T_n$ satisfies the ACC. 
\end{thm}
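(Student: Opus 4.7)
The plan is to adapt the proof of \cite[Theorem 1.1]{hmx} to the complex analytic setting, replacing the algebraic MMP by the MMP for projective morphisms between complex analytic spaces from \cite{fujino-analytic}, and appealing to Theorem \ref{thm1.7} at the end.

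Suppose for contradiction that there exist $(X_i, \Delta_i) \in \mathfrak{T}_n^{an}(I)$ and effective $\mathbb R$-divisors $M_i$ with coefficients in $J$ such that $c_i := \lct(X_i, \Delta_i; M_i)$ is strictly increasing; we may further assume $\{c_i\}$ is bounded. By Definition \ref{def1.1} each $(X_i, \Delta_i + c_i M_i)$ has a non-kawamata log terminal center $W_i \subset \Supp M_i$; pick $x_i \in W_i$. By Remark \ref{rem1.4}(2), shrink $X_i$ to a relatively compact Stein open neighborhood of $x_i$, the setting in which the analytic MMP of \cite{fujino-analytic} applies.

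For each $i$, take a log resolution of $(X_i, \Delta_i + c_i M_i)$ and run an MMP over $X_i$ in the analytic category to obtain a projective birational morphism $\phi_i \colon Y_i \to X_i$ and a $\mathbb Q$-factorial dlt pair $(Y_i, \Gamma_i)$ with $K_{Y_i} + \Gamma_i = \phi_i^{*}(K_{X_i} + \Delta_i + c_i M_i)$, such that some component $E_i$ of $\lfloor \Gamma_i \rfloor$ has $x_i \in \phi_i(E_i)$. Divisorial adjunction to $E_i$ produces an lc pair $(E_i, \Theta_i)$ with $K_{E_i} + \Theta_i \equiv 0$ over $\phi_i(E_i)$, and by Koll\'ar's analysis of the different the coefficients of $\Theta_i$ lie in a fixed DCC set $D$ built from $I$, $J$, and the bounded family $\{c_i\}$. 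A general fiber $F_i$ of $E_i \to \phi_i(E_i)$ is a projective variety of dimension at most $n - 1$, yielding a projective lc pair $(F_i, \Theta_i|_{F_i})$ with $K_{F_i} + \Theta_i|_{F_i}$ numerically trivial and with coefficients in $D$.

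Theorem \ref{thm1.7} applied to the pairs $(F_i, \Theta_i|_{F_i})$ forces all such coefficients into a finite subset $D_0 \subseteq D$. However, the contribution of $c_i M_i$ to these coefficients depends affinely on $c_i$ with nonzero slope on the strict transform of $M_i$, so infinitely many distinct values of $c_i$ would produce infinitely many distinct coefficients, contradicting the finiteness of $D_0$.

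The key technical step, and the main obstacle, is the analytic model construction: running an MMP over a relatively compact Stein open base to extract the chosen non-klt place as a divisor on a $\mathbb Q$-factorial dlt model while keeping adequate control of the adjoint pair on $E_i$. This is precisely the kind of application for which the analytic MMP of \cite{fujino-analytic} is designed; once it is carried out, the adjunction, the reduction to a projective general fiber, and the appeal to Theorem \ref{thm1.7} transcribe the algebraic argument of \cite{hmx} essentially verbatim.
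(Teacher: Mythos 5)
Your strategy is the one the paper itself follows: Theorem \ref{thm1.9} is deduced from the inclusion $\mathcal H\mathcal T_n\subset \LCT^{an}_n(\{0\},\mathbb Z_{>0})$ together with the main Theorem \ref{thm-main}, and the proof of Theorem \ref{thm-main} is exactly your outline (an analytic dlt blow-up extracting a non-klt place over the chosen center, adjunction, passage to a general fiber, then Theorem \ref{thm1.7}); the paper packages the model construction as Lemma \ref{lem2.2}. That said, three concrete points are missing or incorrect as stated. First, you never relate the $L^2$-integrability definition of $\lct_0(f)$ in Definition \ref{def1.8} to the divisorial threshold of Definition \ref{def1.1}; without this identification (standard, via a log resolution and the valuative criterion for integrability of $|f|^{-2s}$) the ACC statement you prove says nothing about $\mathcal H\mathcal T_n$. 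Second, your adjunction step silently assumes the non-klt center has codimension at least two: if the center is a divisor, the ``general fiber of $E_i\to\phi_i(E_i)$'' is a point and the numerically trivial pair you produce carries no coefficient information. This case must be treated separately; it is easy (for $\mathcal H\mathcal T_n$ it forces $c_i=1/a_i$ with $a_i\in\mathbb Z_{>0}$, an ACC set, and in general the paper uses the finiteness of $L\cap K$), but it cannot be omitted.

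Third, the concluding claim that ``infinitely many distinct values of $c_i$ would produce infinitely many distinct coefficients'' is not valid as written. The coefficient produced by adjunction has the form $\frac{m_i-1+f_i+k_ic_i\alpha_i}{m_i}$, where $m_i$, $k_i$, $f_i$, $\alpha_i$ all vary with $i$; a single value in the finite set $K_0$ can a priori be realized by infinitely many distinct $c_i\alpha_i$ (for instance by letting $k_i\to\infty$), so finiteness of the coefficient set does not immediately bound $\{c_i\}$. Extracting the ACC for $\{c_i\alpha_i\}$ from this finiteness requires the DCC structure of all the inputs and is precisely the content of \cite[Lemma 5.2]{hmx}, which the paper invokes at this point and which your affine-slope heuristic does not replace. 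With these three repairs your argument coincides with the paper's.
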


Note that the following natural inclusion 
\begin{equation*}
\mathcal H\mathcal T_n\subset \LCT^{an}_n(\{0\}, \mathbb Z_{>0}) 
\end{equation*} 
holds. Therefore, Theorem \ref{thm1.9} is a very special 
case of Theorem \ref{thm-main}. 

\begin{ack}\label{a-ack}
The author was partially 
supported by JSPS KAKENHI Grant Numbers 
JP19H01787, JP20H00111, JP21H00974, JP21H04994. 
He would like to thank Kenta Hashizume very much 
for useful discussions and comments. 
He also thanks Masayuki Kawakita and Shunsuke Takagi. 
\end{ack}

In this paper, we will freely use \cite{fujino-analytic}. 
We always assume that complex 
analytic spaces are {\em{Hausdorff}} and {\em{second-countable}}. 
We use the standard notation of the theory of 
minimal models as in \cite{kollar-mori}, \cite{fujino-foundations}, 
and \cite{fujino-analytic}. 

\section{Proof} 
Let us start with the definition of ACC sets and DCC sets. 

\begin{defn}[{ACC sets and DCC 
sets, see \cite[3.4.~DCC sets]{hmx}}]\label{def2.1}
Let $I$ be a set of real numbers. We say that $I$ satisfies the 
{\em{ascending chain condition}} or {\em{ACC}} 
(resp.~{\em{descending chain condition}} or {\em{DCC}}) if 
it does not contain any infinite strictly increasing (resp.~decreasing) 
sequences. 

We take $I\subset [0, 1]$. 
We put 
\begin{equation*}
I_+:=\left\{ 0\right\} \bigcup 
\left\{ j\in [0, 1] \left|\, \text{$j=\sum _{p=1}^l i_p$ 
for some $i_1\ldots, i_l\in I$}\right.\right\}
\end{equation*}
and 
\begin{equation*}
D(I):=\left\{ a\in [0, 1] \left|\, \text{$a=\begin{frac}{m-1+f}{m}\end{frac}$ 
for some $m\in \mathbb Z_{>0}$ and $f\in I_+$}\right.\right\}. 
\end{equation*}
It is easy to see that $I$ satisfies the DCC if and only if $D(I)$ satisfies 
the DCC. 
\end{defn}

Without any difficulties, 
we can prove a slight modification of \cite[Lemma 5.1]{hmx} 
for complex analytic spaces by using \cite{fujino-analytic}. 

\begin{lem}\label{lem2.2}
We fix a positive integer $n$ and a set $1\in I\subset [0, 1]$. 
Assume that $(X, \Delta+\Delta')$ is an 
$(n+1)$-dimensional log canonical pair such that 
$\Delta\geq 0$, $\Delta'\geq 0$ is $\mathbb R$-Cartier, 
and the coefficients of $\Delta$, $\Delta'$ and 
$\Delta+\Delta'$ belong to $I$. 
We further assume that there exists a non-kawamata log 
terminal center $V$ of $(X, \Delta+\Delta')$ such 
that $V\subset \Supp \Delta'$ with $\dim V\leq \dim X-2$. 

Then we can construct a log canonical pair $(S, \Theta)$, 
where $S$ is a projective variety of dimension at most $n$, 
the coefficients of $\Theta$ belong to $D(I)$, 
$K_S+\Theta$ is numerically trivial, and some component 
of $\Theta$ has coefficient 
\begin{equation*}
\frac{m-1+f+kc}{m}, 
\end{equation*} 
where $m$ and $k$ are positive integers, $f\in I_+$, and 
$c\in I$ is the coefficient of some component of $\Delta'$. 
\end{lem}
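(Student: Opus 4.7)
\emph{Proof plan.} The strategy is to imitate the proof of \cite[Lemma~5.1]{hmx}, replacing the algebraic minimal model program with the minimal model program for projective morphisms between complex analytic spaces developed in \cite{fujino-analytic}. The argument has three parts: (i)~reduce to a relatively compact, projective-morphism setting near $V$; (ii)~extract a divisor $E$ mapping onto $V$ and perform adjunction; (iii)~run an MMP to produce a numerically trivial model and extract a projective fiber $S$.

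\emph{Localization and dlt modification.} First I would shrink $X$ around a generic point of $V$, replacing it by a relatively compact Stein open neighborhood so that every projective bimeromorphic modification lies within the scope of \cite{fujino-analytic}. Next, take a dlt blowup $f\colon Y\to X$ of $(X,\Delta+\Delta')$, whose existence in the analytic category follows from \cite{fujino-analytic}. Writing $K_Y+\Delta_Y+\Delta'_Y=f^{*}(K_X+\Delta+\Delta')$ with the natural decomposition, the hypothesis $\dim V\le\dim X-2$ together with $V\subset\Supp\Delta'$ guarantees that some prime component $E$ of the round-down $\lfloor\Delta_Y+\Delta'_Y\rfloor$ satisfies $f(E)\supset V$ and meets a component of $\Delta'_Y$ transversely.

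\emph{Adjunction and the coefficient formula.} Applying Shokurov's adjunction to $E$ gives
\[
(K_Y+\Delta_Y+\Delta'_Y)|_E=K_E+\Theta_E,
\]
with the coefficients of $\Theta_E$ lying in $D(I)$. The bookkeeping that produces the required coefficient structure is that $V\subset\Supp\Delta'$ forces a distinguished component $T\subset\Theta_E$ to pick up contributions from $\Delta'_Y$: its coefficient takes exactly the prescribed shape
\[
\frac{m-1+f+kc}{m},
\]
where $m$ is the ramification index along $T$, $f\in I_+$ collects the other boundary contributions, $k$ is the local intersection multiplicity of $E$ with a component of $\Delta'_Y$, and $c\in I$ is the coefficient of that component of $\Delta'$.

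\emph{MMP to a projective, numerically trivial model.} Finally I would run a $(K_E+\Theta_E)$-MMP relative to a suitably chosen base (for instance over $f(E)\subset X$, or over the image of a minimal non-kawamata log terminal center contained in $E$) using the relative MMP of \cite{fujino-analytic}. Terminating at either a Mori fiber space or a log minimal model with semiample $K+\Theta$, I would pass to a general fiber $S$ of the resulting fibration: such a fiber is a projective variety of dimension at most~$n$, the restriction $\Theta$ of the moved boundary has coefficients still in $D(I)$, $K_S+\Theta$ is numerically trivial, and the distinguished component $T$ survives the MMP and restricts to a component of $\Theta$ of the required coefficient.

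\emph{Main obstacle.} The principal difficulty is the interplay between the complex analytic nature of $X$ and the requirement that $S$ be a genuine projective algebraic variety with $K_S+\Theta\equiv 0$. One must arrange all the intermediate projective morphisms so that the analytic MMP of \cite{fujino-analytic} applies, terminates, and yields a fibration whose general fiber is projective; at the same time one must track, step by step, that the specific coefficient $\tfrac{m-1+f+kc}{m}$ attached to~$T$ is not destroyed by any divisorial contraction or flip. Once these points are arranged, the rest of the argument is a transcription of \cite[Lemma~5.1]{hmx}.
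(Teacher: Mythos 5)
Your first two steps (localization around a general point of $V$, dlt blow-up extracting a divisor over $V$, and adjunction to get the coefficient $\tfrac{m-1+f+kc}{m}$) match the paper's proof. But your final step is where the proposal goes wrong, and it reveals that you have missed the observation that makes the lemma essentially immediate: since $\pi\colon Y\to X$ is a crepant dlt modification, $K_Y+\Delta_Y=\pi^*(K_X+\Delta+\Delta')$ is numerically trivial on every fiber of $\pi$. Hence, after adjunction to the exceptional divisor $S$ with $\pi(S)=V$, the pair $(S_v,\Theta_v)$ obtained by restricting to the fiber over a general point $v\in V$ is already a projective pair (a fiber of a projective morphism) with $K_{S_v}+\Theta_v$ numerically trivial. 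No MMP is needed, and none of the difficulties you list under ``Main obstacle'' actually arise. As written, your MMP step is not just superfluous but defective: (i) in the Mori fiber space branch of your dichotomy the general fiber has $-(K+\Theta)$ ample, not $K+\Theta\equiv 0$, so that branch does not produce what the lemma asserts; (ii) you explicitly flag, but do not resolve, the possibility that the distinguished component $T$ carrying the coefficient $\tfrac{m-1+f+kc}{m}$ is contracted by a divisorial contraction or flip. In the paper's argument this problem never occurs because nothing is contracted.

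A second, smaller gap: you assert without justification that the extracted divisor over $V$ ``meets a component of $\Delta'_Y$ transversely.'' This is exactly the point where the hypothesis that $\Delta'$ is $\mathbb R$-Cartier is used: $\pi^*\Delta'$ is then well defined and $\pi$-numerically trivial, and this is what forces the existence of a component $B$ of $\Supp\pi^{-1}_*\Delta'$ with $S\cap B\neq\emptyset$ and $\pi(S\cap B)=V$, which in turn is what injects the coefficient $c$ of a component of $\Delta'$ into the adjunction formula. Without this step you have no control on why the special coefficient appears on a component that survives to the fiber $S_v$. Finally, note that you should also take $V$ maximal among the relevant non-klt centers and arrange $\pi(S)=V$ exactly (condition (d) of the dlt blow-up in \cite[Theorem 1.21]{fujino-analytic}), not merely $f(E)\supset V$, so that the fiber over a general $v\in V$ is the right object to restrict to.
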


The proof of \cite[Lemma 5.1]{hmx} works with only some 
minor modifications since we can always construct dlt blow-ups 
by \cite{fujino-analytic} in the complex analytic setting. 

\begin{proof}[Proof of Lemma \ref{lem2.2}] 
We can replace $V$ with a maximal (with respect to inclusion) 
non-kawamata log terminal center of $(X, \Delta+\Delta')$ 
satisfying $\dim V\leq \dim X-2$ and $V\subset \Supp \Delta'$. 
We take an analytically sufficiently general point $P$ of $V$. 
Then we take an open neighborhood 
$U$ of $P$ and a Stein compact subset $W$ of $X$ such that 
$U\subset W$ and that $\Gamma (W, \mathcal O_X)$ is 
noetherian. By \cite[Theorem 1.21]{fujino-analytic}, 
after shrinking $X$ around $W$ suitably, 
we can construct a projective 
bimeromorphic morphism $\pi\colon Y\to X$ with 
$K_Y+\Delta_Y=\pi^*(K_X+\Delta+\Delta')$ such 
that 
\begin{itemize}
\item[(a)] $(Y, \Delta_Y)$ is divisorial log terminal, 
\item[(b)] $Y$ is $\mathbb Q$-factorial over $W$, 
\item[(c)] $a(E, X, \Delta+\Delta')=-1$ holds 
for every $\pi$-exceptional divisor $E$, and 
\item[(d)] there exists a $\pi$-exceptional 
divisor $F$ on $Y$ such that $\pi(F)=V$. 
\end{itemize} 
Since $\Delta'$ is $\mathbb R$-Cartier by assumption, 
$\pi^*\Delta'$ is well-defined and is $\pi$-numerically 
trivial. 
Hence we can find $B$, which is 
an irreducible component of $\Supp \pi^{-1}_*\Delta'$, and 
a $\pi$-exceptional divisor $S$ with $S\cap B\ne \emptyset$, 
$\pi(S)=V$, and $\pi(S\cap B)=V$. 
By adjunction, we obtain 
\begin{equation*}
K_S+\Theta:=(K_Y+\Delta_Y)|_S
\end{equation*} 
such that the coefficients of $\Theta$ belong to $D(I)$ and 
some component of $\Theta$ has a coefficient 
of the form 
\begin{equation*}
\frac{m-1+f+kc}{m}, 
\end{equation*} 
where $m$ and $k$ are positive integers, $f\in I_+$, and 
$c\in I$ is the coefficient of $B$ in $\pi^{-1}_*\Delta'$. 
We take an analytically sufficiently general point $v\in V\cap U$ 
and consider the fiber over $v$. Then 
we obtain $(S_v, \Theta_v)$, which is 
divisorial log terminal with $\dim S_v\leq n$, 
such that the coefficients of $\Theta_v$ belong to $D(I)$, some 
component of $\Theta_v$ has a coefficient of the form 
\begin{equation*}
\frac{m-1+f+kc}{m} 
\end{equation*} 
as desired, 
and $K_{S_v}+\Theta_v$ is numerically trivial. 
This is what we wanted. 
\end{proof}

Let us prove Theorem \ref{thm-main}. 

\begin{proof}[Proof of Theorem \ref{thm-main}]
We assume that $c_1, c_2, \ldots \in \LCT^{an}_m(I, J)$ such that 
$c_i\leq c_{i+1}$ holds for every $i$. 
It is sufficient to prove that 
$c_i=c_{i+1}$ holds for every sufficiently large $i$. 
By definition, we can take an $n$-dimensional 
log canonical pair $(X_i, \Delta_i)$ and an effective 
$\mathbb R$-Cartier $\mathbb R$-divisor 
$M_i$ on $X_i$ such that the coefficients of $\Delta_i$ 
belong to $I$, the coefficients of $M_i$ belong to 
$J$, $(X_i, \Delta_i+c_iM_i)$ is 
log canonical, and there exists a non-kawamata log 
terminal center $V_i$ of $(X_i, \Delta_i+c_iM_i)$ with $V_i\subset 
\Supp M_i$ for every $i$. 

We put 
\begin{equation*}
K=I\cup \left\{c_i \alpha\in [0, 1]\left|\, i\in \mathbb Z_{>0}, \alpha \in 
J\right.\right\}\cup \left\{\beta+c_i \gamma \in [0, 1] 
\left| \, \text{$i\in \mathbb Z_{>0}, \beta\in I$, $\gamma \in J$}\right.
\right\} \cup \{1\}. 
\end{equation*} 
Then the coefficient of $\Delta_i$, $c_iM_i$, and 
$\Delta_i+c_iM_i$ belong to $K$. 
It is easy to see that $K$ satisfies the DCC. 
We also put 
\begin{equation*}
L=\{1-\alpha \, |\, \alpha \in I\}. 
\end{equation*} 
Then $L$ obviously satisfies the ACC. 
Hence $L\cap K$ is a finite set since $K$ satisfies the DCC. 

If $\dim V_i=n-1$, then the coefficient 
of $V_i$ in $c_iM_i$ is in the finite set 
$L\cap K$. Therefore, it is sufficient to 
treat the case when $\dim V_i\leq n-2$ 
holds for 
every $i$. Hence, from now on, we assume that 
$\dim V_i\leq n-2$ holds for every $i$. 
By Lemma \ref{lem2.2}, 
for every $i$, 
we can construct a projective 
log canonical pair $(S_i, \Theta_i)$ such that 
$\dim S_i\leq n-1$, 
the coefficients of $\Theta_i$ belong to $D(K)$, 
$K_{S_i}+\Theta_i$ is numerically trivial, and some component 
of $\Theta_i$ has coefficient 
\begin{equation*}
\frac{m_i-1+f_i+k_ic_i\alpha_i}{m_i}, 
\end{equation*} 
where $m_i$ and $k_i$ are positive integers, $f_i\in K_+$, and 
$\alpha_i\in J$. 
By Theorem \ref{thm1.7}, which is nothing but \cite[Theorem 1.5]{hmx}, 
there exists a finite subset $K_0\subset D(K)$ such that 
\begin{equation*}
\frac{m_i-1+f_i+k_ic_i\alpha_i}{m_i}\in K_0. 
\end{equation*} 
Then, by \cite[Lemma 5.2]{hmx}, 
\begin{equation*}
\{c_i \alpha_i\}_{i\in \mathbb Z_{>0}}
\end{equation*} 
is a finite set. This implies that $c_i=c_{i+1}$ holds for every sufficiently 
large $i$ since $\alpha_i\in J$ for every $i$. 

This is what we wanted, that is, $\LCT^{an}_n(I, J)$ satisfies 
the ACC. 
\end{proof}

\end{document}